\numberwithin{equation}{section}
\newcommand{\wha}[1]{\widehat{#1}}
\newcommand{\Lebn}[2]{\left\lVert #1 \right\rVert_{L^{#2}}}
\newcommand{\C}{\mathbb{C}}
\newcommand{\R}{\mathbb{R}}
\newcommand{\Z}{\mathbb{Z}}
\renewcommand{\Im}{\operatorname{Im}}
\renewcommand{\Re}{\operatorname{Re}}
\def\({\left(}
\def\){\right)}
\def\<{\left\langle}
\def\>{\right\rangle}
\def\le{\leqslant}
\def\ge{\geqslant}
\def \wt{\widetilde}
\def \e{\varepsilon}
\def \D{\Delta}
\def \pa{\partial}
\def \s{\sigma}
\def \a{\alpha}
\def \t{\theta}
\def \ta{\tau}
\newcommand{\eps}{\varepsilon}
\theoremstyle{plain}
\newtheorem{theorem}{Theorem}[section]
\newtheorem{definition}[theorem]{Definition}
\newtheorem{lemma}[theorem]{Lemma}
\theoremstyle{remark}
\newtheorem{remark}[theorem]{Remark}
\begin{document}
\title[Lifespan of solutions to NLS with homogeneous nonlinearity]{Lifespan of solutions to nonlinear Schr\"odinger equations with general homogeneous nonlinearity of the critical order}
\author[H. Miyazaki]{Hayato Miyazaki}
\address[]{Advanced Science Program, Department of Integrated Science and Technology, National Institute of Technology, Tsuyama College, Tsuyama, Okayama, 708-8509, Japan}
\email{miyazaki@tsuyama.kosen-ac.jp}
\author[M. Sobajima]{Motohiro Sobajima}
\address[]{Department of Mathematics, Faculty of Science and Technology, Tokyo University of Science, 2641
Yamazaki, Noda-shi, Chiba, 278-8510, Japan}
\email{msobajima1984@gmail.com}
\keywords{upper bound of lifespan, small data blow-up, nonlinear Schr\"odinger equations}
\subjclass[2010]{35Q55, 35B44}
\date{}

\maketitle

\begin{abstract}
This paper is concerned with the upper bound of the lifespan of solutions to nonlinear Schr\"odinger equations with general homogeneous nonlinearity of the critical order. In \cite{MM2}, Masaki and the first author obtain the upper bound of the lifespan of solutions to our equation via a test function method introduced by \cite{QZ1, QZ2}. Their nonlinearity contains a non-oscillating term $|u|^{1+2/d}$ which causes difficultly for constructing an even small data global solution. The non-oscillating term corresponds to the $L^1$-scaling critical. 
In this paper, it turns out that the upper bound can be refined by employing a unified test function by Ikeda and the second author \cite{IS1}.
\end{abstract}

\section{Introduction}
In this paper, we consider the lifespan of solutions to nonlinear Schr\"odinger equations 
\begin{align} 
	i \pa_t u + \Delta u = F(u),\; (t,x) \in \R^{1+d} \label{nls} \tag{NLS}
\end{align}
where $u=u(t,x)$ is a complex-valued unknown function, and the nonlinearity $F$ is homogeneous of degree $1+2/d$, that is, $F$ satisfies the condition
\begin{align}
	F(\lambda z) = \lambda^{1+\frac2d} F(z) \label{eq:cond1}
\end{align}
for any $z \in \C$ and $\lambda >0$. It is well-known that the exponent $1+2/d$ is critical in view of large time behavior of solutions to \eqref{nls}. 
Indeed, considering $F(u) = \eta |u|^{p-1}u$ with $p>1$ and $\eta \in \R \setminus \{0\}$, the solution asymptotically behaves like a free solution as $t \to \infty$, when $p>1+2/d$.  On the other hand, if $p=1+2/d$, then the solution behaves like a free solution with a logarithmic phase correction
\begin{align}
	(2it)^{-\frac{d}{2}}  e^{i \frac{|x|^2}{4t}} \wha{u_{+}}\(\frac{x}{2t}\) \exp \( -\eta i \left| \wha{u_{+}}\(\frac{x}{2t}\) \right|^{\frac{2}{d}} \log t \) \label{eq:ma}
\end{align}
for suitable function ${u_+}$ as $t \to \infty$, where $\wha{u_+}$ denotes the Fourier transform of $u_+$ with respect to $x$ (cf. \cite{TY, O1, GO, HN}).
Furthermore, in the critical case,
the behavior of the solutions depends on the shape of the nonlinearity. More precisely, under \eqref{eq:cond1} and some summability assumption on $\{g_n\}_n$, Masaki and the first author \cite{MM1} introduce a decomposition of the nonlinearity
\begin{align}
	F(u) = g_0 |u|^{1+\frac2d} + g_1 |u|^{\frac2d}u + \sum_{n\neq 0,1} g_n |u|^{1+\frac2d-n}u^n \label{eq:decomp}
\end{align}
with the coefficients
\begin{align*}
	g_n = \frac1{2\pi} \int_0^{2\pi} F(e^{i\theta}) e^{-in\theta} d\theta. 
\end{align*}
By using the decomposition, they prove that if $g_0=0$ and $g_1 \in \R$, then \eqref{nls} admits a solution 
which asymptotically behaves like \eqref{eq:ma} with $\eta = g_1$ (see also Masaki, the first author and Uriya \cite{MMU}, and references therein).


Later on, in \cite{MM2}, they try to deal with the large time behavior of the solution to \eqref{nls} under the case $g_0 \neq 0$ in which the nonlinearity contains the non-oscillating term $|u|^{1+2/d}$. However, according to the previous works such as \cite{II, IW, S, ST}, we do not always have even small data global solutions to \eqref{nls} in this case. Hence it seems difficult to find certain large time behavior of the solution. For this reason, instead of specifying the large time behavior, they denies the existence of the solution which behaves like a free solution with or without a logarithmic phase correction like \eqref{eq:ma} with $\eta =g_1$. They further obtain the upper bound of the lifespan of the solution with a general initial data corresponding to a finite time blowup result for small data (see Theorem \ref{thm:1}). 
In this paper, we aim to refine the upper bound of the lifespan given by \cite{MM2}.
Here, let us briefly review previous analysis of blowup phenomena and the lifespan of solutions to the nonlinear Schr\"odinger equation with non-oscillating nonlinearity 
\begin{align}
	i \pa_t u + \Delta u = \rho |u|^p, \quad \rho \in \C \setminus \{0\},\quad p>1. \label{ngnls}
\end{align}
Ikeda and Wakasugi \cite{IW} discovered the blowup phenomena of solutions to \eqref{ngnls} when $p \le 1+2/d$. In \cite{II}, Ikeda and Inui obtained the upper bound of the lifespan of solutions to \eqref{ngnls} under slowly decaying initial data if $p < 1+2/d$. Further, 
Fujiwara and Ozawa \cite{FO} give an estimate of the lifespan of solutions to \eqref{ngnls} in $d$-dimensional Torus.
Also, Oh, Okamoto and Pocovnicu \cite{OOP} investigate stability of the finite time blowup solution constructed
in \cite{II} under a stochastic setting. In the $L^1$-scaling critical case $p=1+2/d$, Ikeda and the second author \cite{IS1} give an upper bound of the lifespan of solutions to \eqref{ngnls} which can be regarded as a refinement of \cite{IW} and \cite{FO}.
They introduce a unified test function, in order to treat the lifespan estimate of blowup solutions to semilinear evolution equations in $L^1$-scaling critical setting such as \eqref{ngnls}. Their test function method is an improvement of that given by Mitidieri and Pokhozhaev \cite{MP}. 

\section{Main results}

Before stating the main results, we recall the technique for the decomposition of the nonlinearity like \eqref{eq:decomp} developed by \cite{MM1}.
We identify a homogeneous nonlinearity $F$ and $2\pi$-periodic function $g$ as follows:
A homogeneous nonlinearity $F$ is written as
\[
	F(u) = |u|^{1+\frac{2}d} F\( \frac{u}{|u|} \).
\]
Let us here introduce a $2\pi$-periodic function $g(\theta)=g_F(\theta)$ by $g_F(\theta) = F(e^{i\theta})$.
Conversely, for a given $2\pi$-periodic function $g$, we can construct a homogeneous nonlinearity $F=F_g:\C \to \C$ by
$F_g(u) = |u|^{1+\frac{2}d} g\( \arg u \)$ if $u\neq 0$ and
$F_g(u) = 0$ if $u=0$.
Since $g(\theta)$ is $2\pi$-periodic function, it holds that 
\[
	g(\theta)=\sum_{n\in\Z} {g}_n e^{in\theta}, \quad g_n := \frac{1}{2\pi} \int_0^{2\pi} g(\t) e^{-in\t}\ d\t
\]
under at least $\{g_n\} \in \ell^1(\Z)$. Hence, the above expansion gives us \eqref{eq:decomp}.

In this paper, we refine the upper bound of the lifespan in \cite{MM2} by employing the technique in \cite{IS1}.
To state the main results, let us give the definition of a weak solution and the lifespan of the solution.

%

%

\begin{definition}[weak solution]
Suppose that $F(z)$ is locally uniformly bounded. 
We say that a function $u(t,x) \in \mathscr{S}'((-\infty,T)\times \R^d)$
is a weak solution to \eqref{nls} with initial data $u(0,x) = u_0(x) \in L^1_{\mathrm{loc}}(\R^d)$ on $[0,T)$,
$T>0$, if $u \in L^{{(d+2)}/{d}}_{\mathrm{loc}} ( (0, T) \times \R^d)$
 and the identity
\begin{align}
\begin{aligned}
	&{}\int_{(0,T) \times \R^d} u(t,x)(-i\pa_t \psi(t,x) + \Delta\psi(t,x)) dxdt\\
	={}& i\int_{\R^d} u_0(x) \psi(0,x)  dx + \int_{(0,T) \times \R^d} F(u(t,x))\psi(t,x) dxdt
\end{aligned}
	\label{weak:1}
\end{align}
holds for any test function $\psi \in C_0^{\infty} ((-\infty,T) \times \R^d)$.
\end{definition}
\begin{definition}[Lifespan of solutions]
For a given data $u_0\in L^1_{\mathrm{loc}}(\R^d)$, we define the maximal existence time by
\begin{align*}
 	T_{\max} =
	T_{\max}(u_0)
	 := \sup \left\{T >0\; ;\;
	\begin{aligned}
	&\text{There exists a weak solution $u(t)$}\\
	&\text{to \eqref{nls} with $u(0) = u_0$ on $[0, T)$}
	\end{aligned}
	\right\}.
\end{align*}
\end{definition}

By the test function method introduced by Zhang \cite{QZ1, QZ2}, the first author and Masaki \cite{MM2} give an upper bound of the lifespan under $g_0 =1$. Remark that without loss of generality, we may let $g_0=1$ by change of variable.

\begin{theorem}[\cite{MM2}] \label{thm:1}
Let $d\ge1$ and $\e >0$. 
Suppose that $\{g_n\}_n \in \ell^1(\Z)$ satisfies $g_0=1$ and
$\mu := g_0- \sum_{n\neq0} |g_n|>0$.
If $f \in L^{1}_{\mathrm{loc}}(\R^d)$ satisfies
\begin{align*}
	-\Im f(x) \ge 
	\left\{
	\begin{aligned}
	&|x|^{-k} && |x| >R_0, \\
	&0 && |x| \le R_0,
	\end{aligned}
	\right.
\end{align*}
for some $k\le d$ and $R_0>0$, then there exist $C=C(k,R_0,\mu)>0$ and $\e_0 >0$ such that
\begin{align}
	T_{\max}(\eps f) \le 
	\left\{
	\begin{aligned}
	&C \e^{-\frac{2}{d-k}} && k<d, \\
	&\exp (C/\eps) && k=d
	\end{aligned}
	\right.
	\label{thm:ii1}
\end{align}
holds for any $\e \in (0,\e_0)$. 
\end{theorem}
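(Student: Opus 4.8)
The plan is to run a test-function argument whose engine is the coercivity of $\Re F$. First I would record the structural fact that makes the non-oscillating term usable. Using the identification $F(u)=|u|^{1+2/d}g(\arg u)$ with $g(\theta)=\sum_n g_n e^{in\theta}$, one has for every $\theta$
\[
	\Re g(\theta)=g_0+\Re\sum_{n\neq0}g_n e^{in\theta}\ge g_0-\sum_{n\neq0}|g_n|=\mu .
\]
Hence, writing $p:=1+\tfrac2d$, we get the pointwise bound $\Re F(u)\ge\mu|u|^p$ for all $u\in\C$, which is the only place where the hypothesis $\mu>0$ is used.

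Next I would pair the weak formulation \eqref{weak:1} against a real, nonnegative test function $\psi\ge0$ and take real parts. Using $\Re(-iu)=\Im u$ and $\Re(iu_0)=-\Im u_0$, the identity \eqref{weak:1} becomes
\[
	\int\Re F(u)\,\psi=\int\Im u\,\pa_t\psi+\int\Re u\,\D\psi+\int\Im u_0\,\psi(0,\cdot),
\]
the first three integrals being over $(0,T)\times\R^d$. Inserting the coercivity bound, moving the data term to the left, and using $u_0=\e f$ together with $-\Im f\ge0$, I obtain
\[
	\mu\int|u|^p\psi+\e\int(-\Im f)\,\psi(0,\cdot)\le\int|u|\big(|\pa_t\psi|+|\D\psi|\big).
\]
Then I would apply H\"older in the form $\int|u|\,h=\int(|u|\psi^{1/p})(\psi^{-1/p}h)$ with exponents $p$ and $p'=\tfrac{d+2}2$, followed by Young's inequality, to absorb $\tfrac\mu2\int|u|^p\psi$ into the left-hand side. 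Dropping the remaining nonnegative $\int|u|^p\psi$ leaves the clean estimate
\[
	\e\int(-\Im f)\,\psi(0,\cdot)\le C_\mu\int_{(0,T)\times\R^d}\psi^{1-p'}\big(|\pa_t\psi|+|\D\psi|\big)^{p'}\,dx\,dt .
\]

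Finally I would choose a test function adapted to the Schr\"odinger scaling $|x|\sim\sqrt T$: take $\psi(t,x)=\phi(t/T)^\ell\,\Phi(x/\sqrt T)^\ell$ with $\phi,\Phi$ smooth cutoffs equal to $1$ near the origin, arranged so that $\psi\in C_0^\infty((-\infty,T)\times\R^d)$ and $\phi(0)=1$, and with $\ell$ large enough that $\psi^{1-p'}$ (whose exponent is negative) stays integrable against the derivative factors. Under this scaling both $\pa_t$ and $\D$ cost a factor $T^{-1}$, so the right-hand side scales like $T^{-p'}\cdot T\cdot T^{d/2}=T^{1+d/2-p'}=T^0$; this is exactly the $L^1$-critical balance, and the right-hand side is bounded by a constant $C_*$ independent of $T$. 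On the left, since $\psi(0,\cdot)\equiv1$ on $\{|x|\le\sqrt T\}$, the hypothesis gives $\int(-\Im f)\psi(0,\cdot)\ge\int_{R_0<|x|<\sqrt T}|x|^{-k}\,dx$, which grows like $T^{(d-k)/2}$ when $k<d$ and like $\log T$ when $k=d$. Feeding these into $\e\cdot(\text{growth})\le C_\mu C_*$ and letting $T\uparrow T_{\max}$ yields $T_{\max}\lesssim\e^{-2/(d-k)}$ for $k<d$ and $T_{\max}\lesssim\exp(C/\e)$ for $k=d$, the threshold $\e_0$ being what is needed to make $T$ large enough for the asymptotics of the data integral to hold.

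The main obstacle is the borderline nature of the scaling: because $1+\tfrac d2-p'=0$, the right-hand side neither decays nor grows in $T$, so the \emph{entire} $T$-dependence of the bound must come from the data integral $\int(-\Im f)\psi(0,\cdot)$, whose growth rate ($T^{(d-k)/2}$ versus $\log T$) has to be tracked precisely. Technically this forces a careful choice of the vanishing order $\ell$ so that $\psi^{1-p'}(|\pa_t\psi|+|\D\psi|)^{p'}$ is genuinely integrable up to the boundary of $\supp\psi$, together with a justification that the merely $L^{(d+2)/d}_{\mathrm{loc}}$ solution may be paired with the compactly supported $\psi$. Everything else is routine H\"older/Young bookkeeping.
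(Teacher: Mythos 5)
Your proposal is correct: the pointwise coercivity $\Re F(u)\ge\mu|u|^{1+2/d}$ from the $\ell^1$-condition, the sign bookkeeping in the real part of \eqref{weak:1}, the H\"older--Young absorption, and the critical scaling count $T^{1+d/2-p'}=T^0$ with $p'=(d+2)/2$ all check out, and the case split $T_{\max}\le T_1$ versus $T_{\max}>T_1$ handled by the choice of $\varepsilon_0$ is the right way to close. But note what you have reproduced: this paper never proves Theorem \ref{thm:1} --- it quotes it from \cite{MM2}, where it is obtained by Zhang's test function method \cite{QZ1,QZ2} --- and your argument is essentially a reconstruction of that classical scheme (product cutoff at the scales $(T,\sqrt{T})$, absorb, discard). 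The only proof printed in this paper is that of the refined Theorem \ref{thm:m1}, and there the route genuinely differs from yours in two ways. First, the cutoff geometry: you take a product $\phi(t/T)^\ell\,\Phi(x/\sqrt{T})^\ell$, while the paper uses the unified Ikeda--Sobajima cutoff $\psi_R(t,x)=[\eta((|x|^2+t)/R)]^{2p_0'}$ in the single variable $|x|^2+t$, so one parameter $R$ plays the role of both your $T$ and your spatial scale. Second, and decisively, the endgame: after H\"older you apply Young's inequality and \emph{discard} the nonlinear term $\iint|u|^{p_0}\psi$, collapsing everything to $\varepsilon\int(-\Im f)\,\psi(0,\cdot)\le C$; the paper instead \emph{keeps} that term, shows via $Y(R)=\int_0^R y(r)\,\frac{dr}{r}$ that $I_0(R)\ge Y(R)/\log 2$, and thereby converts \eqref{eq:2} into a differential inequality of the form $CRY'(R)\ge\big(\varepsilon\cdot\mathrm{data}(R)+cY(R)\big)^{p_0}$, which it then integrates over $(R^{1/2},R)$. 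Your simpler route is exactly calibrated to \eqref{thm:ii1}: for $k=d$ it yields $\varepsilon\log T\lesssim 1$, i.e.\ $T_{\max}\le\exp(C/\varepsilon)$, and it can never do better, since all information carried by the solution itself has been thrown away; retaining the nonlinear term and running the ODE argument is precisely what upgrades $\exp(C/\varepsilon)$ to $\exp(C\varepsilon^{-2/d})$ (and gives the logarithmic refinements) in \eqref{thm:m2}. One shared ingredient, stated in two forms: your pointwise bound $\Re F(u)\ge\mu|u|^{p_0}$ is the pointwise version of the paper's integral estimate $\Re\sum_n g_n I_n(R)\ge\mu I_0(R)$ (the paper's display writes this constant as $1-\mu$, a harmless slip --- the correct value is $\mu$, exactly the constant your computation produces).
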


In the case $k=d$, as a main result, we give a refinement of the upper bound of the lifespan.

\begin{theorem}[Refinement of the lifespan] \label{thm:m1}
Let $d\ge1$ and $\e >0$. 
Suppose that $\{g_n\}_n \in \ell^1(\Z)$ satisfies $g_0=1$ and
$\mu := g_0- \sum_{n\neq0} |g_n|>0$.
If $f \in L^{1}_{\mathrm{loc}}(\R^d)$ satisfies 
\begin{align}
	-\Im f(x) \ge 
	\left\{
	\begin{aligned}
	&|x|^{-d}(\log |x|)^{-\a} && |x| >R_0, \\
	&0 && |x| \le R_0,
	\end{aligned}
	\right.
	\label{assmp:1}
\end{align}
for some $\a \in \R$ and $R_0>0$,
then there exist $C=C(\a,R_0,\mu)>0$ and $\e_0 >0$ such that
\begin{align}
	T_{\max}(\eps f) \le 
	\begin{cases}
	\exp(C \e^{-\frac{2}{d}}) \quad \text{if}\; \a >1, \\
	\exp \(C \( \e \log \e^{-1} \)^{-\frac{2}d} \) \quad \text{if}\; \a =1, \\
	\exp(C \e^{-\frac{2}{d+2(1-\a)}}) \quad \text{if}\; \a <1
	\end{cases}
	\label{thm:m2}
\end{align}
holds for any $\e \in (0,\e_0)$. 
\end{theorem}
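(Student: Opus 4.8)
The plan is to run a test-function argument on the weak formulation \eqref{weak:1}, following the unified construction of \cite{IS1}, and to reduce the entire theorem to a single weighted integral inequality whose two sides carry competing logarithmic factors.

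First I would extract a sign-definite lower bound from the nonlinearity. Writing $F(u)=|u|^{1+\frac2d}g(\arg u)$ with $g(\t)=\sum_{n}g_ne^{in\t}$ and $g_0=1$, one has $\Re g(\t)=g_0+\Re\sum_{n\neq0}g_ne^{in\t}\ge g_0-\sum_{n\neq0}|g_n|=\mu>0$, hence
\begin{align*}
\Re F(u)\ge\mu\,|u|^{1+\frac2d}.
\end{align*}
Next I take a real-valued test function $\psi\ge0$ with $\psi(0,\cdot)\ge0$ and $\supp\psi\subset\{t<T\}$, and take the real part of \eqref{weak:1}. Since $-\Im(\e f)\ge0$ everywhere and $-\Im(\e f)\ge\e|x|^{-d}(\log|x|)^{-\a}$ on $\{|x|>R_0\}$, the initial term carries a favorable sign; applying Hölder's inequality with the conjugate exponents $p=1+\frac2d$ and $p'=\frac{d+2}2$ together with Young's inequality to absorb $\int|u|^{p}\psi$, I expect to reach the master inequality
\begin{align}
\e\int_{|x|>R_0}|x|^{-d}(\log|x|)^{-\a}\,\psi(0,x)\,dx\le C_\mu\int_{(0,T)\times\R^d}\big(|\pa_t\psi|+|\D\psi|\big)^{\frac{d+2}2}\psi^{-\frac d2}\,dx\,dt.\label{plan:master}
\end{align}
The local integrability $u\in L^{(d+2)/d}_{\mathrm{loc}}$ required in the definition of a weak solution is exactly what legitimizes the Young step.

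The heart of the matter is the choice of $\psi$. I would use the \emph{unified} test function of \cite{IS1}, adapted to the parabolic scaling $t\sim|x|^2$ of \eqref{nls} and depending on a scale $\lambda\le T$: a large power $\psi=\Phi^{\ell}$ (with $\ell\ge d+2$, to render $\psi^{-d/2}$ harmless) of a profile $\Phi$ that equals $1$ on a parabolic core and decays through a \emph{logarithmically wide} layer out to scale $\lambda$. The decisive computation is that, owing to the $L^{1}$-scaling criticality, the naive size $O(1)$ of the right-hand side of \eqref{plan:master} is replaced by
\begin{align*}
\int\big(|\pa_t\psi|+|\D\psi|\big)^{\frac{d+2}2}\psi^{-\frac d2}\,dx\,dt\sim(\log\lambda)^{-\frac d2},
\end{align*}
while on the left the lower bound $\psi(0,\cdot)\gtrsim1$ on $\{R_0<|x|<\sqrt\lambda\}$ produces, via $\int_{R_0}^{\sqrt\lambda}r^{-1}(\log r)^{-\a}\,dr$, the three behaviors
\begin{align*}
\int_{|x|>R_0}|x|^{-d}(\log|x|)^{-\a}\,\psi(0,x)\,dx\sim
\begin{cases}
1,&\a>1,\\
\log\log\lambda,&\a=1,\\
(\log\lambda)^{1-\a},&\a<1.
\end{cases}
\end{align*}

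Finally I would insert these two asymptotics into \eqref{plan:master}, let $\lambda\uparrow T_{\max}(\e f)$, and solve the resulting elementary inequality for $T_{\max}$. In the three regimes this reads $\e\lesssim(\log\lambda)^{-d/2}$, $\e\log\log\lambda\lesssim(\log\lambda)^{-d/2}$, and $\e(\log\lambda)^{1-\a}\lesssim(\log\lambda)^{-d/2}$, which unwind precisely to the bounds $\exp(C\e^{-2/d})$, $\exp\big(C(\e\log\e^{-1})^{-2/d}\big)$, and $\exp\big(C\e^{-2/(d+2(1-\a))}\big)$ of \eqref{thm:m2}. The main obstacle is the middle step: proving the sharp logarithmic gain $(\log\lambda)^{-d/2}=(\log\lambda)^{1-p'}$ for the weighted derivative integral, rather than the crude $O(1)$ bound implicit in Zhang's test function used in \cite{MM2}. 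This forces the profile to be designed so that $\pa_t\Phi$ and $\D\Phi$ concentrate in the logarithmic layer, followed by a careful estimate of the integrand $(|\pa_t\psi|+|\D\psi|)^{p'}\psi^{-d/2}$; the bookkeeping of the cutoffs near the spatial origin and near $t=T$, and the verification that $\psi(0,\cdot)$ keeps its lower bound on the \emph{entire} annulus $\{R_0<|x|<\sqrt\lambda\}$, are the delicate points on which the three-case dichotomy ultimately rests.
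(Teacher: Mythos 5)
Your proposal is correct, but it proves \eqref{thm:m2} by a genuinely different mechanism than the paper. You absorb the nonlinear term completely (H\"older plus Young, starting from the pointwise bound $\Re F(u)\ge \mu |u|^{1+2/d}$, which is a cleaner equivalent of the paper's termwise estimate $|I_n(R)|\le I_0(R)$), and then place the entire burden on one \emph{static} test function whose transition layer is logarithmically spread between the scales $\sqrt{\lambda}$ and $\lambda$. Your key claim does hold: taking $\psi=\Phi^{\ell}$ with $\ell \ge d+2$ and $\Phi=\theta\big(\log(|x|^2+t)/\log\lambda\big)$, one gets $|\pa_t\psi|+|\Delta\psi|\lesssim \Phi^{\ell-2}\,(s\log\lambda)^{-1}$ on the layer $\{\sqrt{\lambda}\le s:=|x|^2+t\le \lambda\}$, and since $\iint_{\{\sqrt{\lambda}\le s\le\lambda,\ t>0\}} s^{-p_0'}\,dx\,dt \simeq \int_{\sqrt{\lambda}}^{\lambda} s^{d/2-p_0'}\,ds=\int_{\sqrt{\lambda}}^{\lambda}\frac{ds}{s}\simeq \log\lambda$, the right-hand side of your master inequality is indeed $\lesssim (\log\lambda)^{1-p_0'}=(\log\lambda)^{-d/2}$; combined with your data lower bounds (which coincide with Lemma \ref{lem:1}) this yields exactly the three cases of \eqref{thm:m2}. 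The paper never constructs such a multi-scale cutoff: it keeps the nonlinear term after H\"older (no Young step), introduces the scale-averaged quantity $Y(R)=\int_0^R y(r)\,dr/r$ built from single-scale cutoffs, and derives the differential inequality $CRY'(R)\ge \big(\e D(R)+cY(R)\big)^{p_0}$, whose integration over $(\sqrt{R},R)$ produces the same crucial inequality $\e D(\sqrt{R})\lesssim (\log R)^{-d/2}$ --- there the logarithmic gain comes from the $\int dR/R$ integration of the ODE rather than from the shape of the test function. Your route is more elementary and delivers the key inequality in one shot, at the price of the careful layer estimate; the paper's route, following \cite{IS1}, avoids delicate cutoff design and is the form that transfers uniformly to other critical evolution equations. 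Three bookkeeping points you should still settle: (i) with a core $\{s\le\sqrt{\lambda}\}$ the lower bound $\psi(0,\cdot)\ge 1$ holds only for $|x|\le\lambda^{1/4}$, not on all of $\{|x|<\sqrt{\lambda}\}$, but this is harmless since it merely replaces $\log\lambda$ by $\tfrac14\log\lambda$ in the data term; (ii) the dichotomy $T_{\max}(\e f)\le R_1$ versus $T_{\max}(\e f)>R_1$, so that the argument only needs to run for $\lambda$ larger than the threshold $R_1$ of the data estimate; and (iii) the choice of $\e_0$ making the bounds valid for all $\e\in(0,\e_0)$, both of which the paper handles explicitly at the start and end of its proof.
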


\begin{remark}
When $\a = 0$, the estimate \eqref{thm:m2} is a refinement of \eqref{thm:ii1}. In fact, we obtain the additional blowup rate $\frac{d}{d+2}$, compared with \eqref{thm:ii1}. 
\end{remark}

\begin{remark}
In addition to the assumption of the theorem, let us suppose that $F(e^{i\theta})$ is Lipschitz continuous and $f\in L^2(\R^d)$.
Then, as mentioned in \cite{MM2}, a standard contraction argument yields a unique solution $u(t) \in C_t(I;L^2_x(\R^d)) \cap L^{\frac{2(d+2)}{d}}_{t,\mathrm{loc}}(I; L_x^{\frac{2(d+2)}{d}}(\R^d))$ in the sense of the integral equation corresponding to \eqref{nls}
\[
	u(t) = U(t) \e f - i \int_{0}^{t} U(t -s) F(u(s)) ds
\]
in $L^2(\R^d)$ for any $t \in I$, where  $I = (-\ta, T)$ for some $\ta$, $T >0$ and $U(t)=e^{it\Delta}$ is the free Schr\"odinger group.
The solution is a weak solution on $[0, T)$ by a suitable extension of $u$ in $(-\infty,-\tau/2)\times\R^d$.
Thus, denoting $I_{\max}$ by a maximal existence interval of the solution, 
$\wt{T} = \sup I_{\max}$ coincides with $T_{\rm{max}}(\e f)$ in Theorem \ref{thm:m1} and
$u(t)$ blows up at $t= \wt{T}$ in such a sense that
$\lim_{t \to \wt{T}-0} \Lebn{u(t)}{2} = \infty$.

\end{remark}

\section{Proof of Theorem \ref{thm:m1}}
We follow the test function method argument as in \cite{IS1}. 
Set $\eta \in C^{\infty}([0,\infty))$ satisfying
\begin{align*}
	\eta(s) 
	\begin{cases}
	= 1 \quad \text{if}\; s \in [0, 1/2] \\
	\text{is decreasing}  \quad \text{if}\; s \in (1/2, 1) \\
	= 0 \quad \text{if}\; s \in [1, \infty),
	\end{cases}
	\quad 
	\eta^{\ast}(s)=
	\begin{cases}
	0 \quad \text{if}\; s \in [0, 1/2) \\
	\eta(s) \quad \text{if}\; s \in [1/2, \infty).
	\end{cases}
\end{align*}
We further define the cut-off functions
\begin{align*}
	\psi_R(t,x) = \left[ \eta\(\frac{|x|^2+t}{R}\)\right]^{2p'_0}, \quad \psi^{\ast}_{R}(t,x) = \left[ \eta^{\ast}\(\frac{|x|^2+t}{R}\)\right]^{2p'_0}
\end{align*}
for any $R>0$ and all $(t,x) \in [0,\infty) \times \R^N$, where $p_0 = 1+\frac2{d}$.
Let us here show the following estimate for the initial data:
\begin{lemma} \label{lem:1}
Let $f \in L^{1}_{\mathrm{loc}}(\R^d)$ satisfy \eqref{assmp:1} for some $\a \in \R$ and $R_0>0$. 
Then there exist constants $C=C(d, R_0, \a)>0$ and $R_1 = R_1(d,R_0, \a)>0$ such that
\begin{align}
	- \int_{\R^d}\Im f(x) \psi_R(0, x) dx \ge
	\left\{
	\begin{aligned}
	&{}C \quad \text{if}\; \a >1, \\
	&{}C \log \log R \quad \text{if}\; \a =1, \\
	&{}C (\log R)^{1-\a} \quad \text{if}\; \a <1
	\end{aligned}
	\right. \label{thm:i1}
\end{align}
holds as long as $R_1 < R$.
\end{lemma}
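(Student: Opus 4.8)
The plan is to throw away the region where the cut-off vanishes and to reduce the remaining integral to a one-dimensional logarithmic integral. Since $\psi_R(0,x) = [\eta(|x|^2/R)]^{2p'_0} \ge 0$ and, by \eqref{assmp:1}, $-\Im f(x) \ge 0$ everywhere, the integrand is nonnegative, so I may restrict the domain of integration to any convenient subset. First I would choose $R_1$ so large that $R > R_1$ forces $\sqrt{R/2} > r_0 := \max\{R_0, e\}$; on the annulus $r_0 < |x| \le \sqrt{R/2}$ one has $|x|^2/R \le 1/2$, hence $\eta(|x|^2/R) = 1$ and $\psi_R(0,x) = 1$. Inserting the pointwise bound \eqref{assmp:1} on this annulus yields
\[
	-\int_{\R^d} \Im f(x)\, \psi_R(0,x)\, dx \ge \int_{r_0 < |x| \le \sqrt{R/2}} |x|^{-d}(\log|x|)^{-\a}\, dx.
\]

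Passing to polar coordinates and writing $\omega_{d-1}$ for the surface area of the unit sphere, the factor $r^{d-1}$ from the measure cancels $r^{-d}$, leaving the right-hand side equal to $\omega_{d-1} \int_{r_0}^{\sqrt{R/2}} r^{-1}(\log r)^{-\a}\, dr$. Substituting $u = \log r$ turns this into $\omega_{d-1}\int_{\log r_0}^{\frac12\log(R/2)} u^{-\a}\, du$, where the lower limit $\log r_0 \ge 1$ keeps $u$ bounded away from $0$ and the integrand positive, so that $(\log|x|)^{-\a}$ is well defined throughout. I would then evaluate this elementary integral in the three regimes: for $\a > 1$ the antiderivative $\frac{u^{1-\a}}{1-\a}$ is bounded, so the integral converges as $R \to \infty$ to a strictly positive constant; for $\a = 1$ it equals $\log u$, giving $\log\!\big(\tfrac12\log(R/2)\big) - \log\log r_0 \sim \log\log R$; and for $\a < 1$ the antiderivative $\frac{u^{1-\a}}{1-\a}$ grows, with leading term $\big(\tfrac12\log(R/2)\big)^{1-\a}/(1-\a) \sim (\log R)^{1-\a}$.

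Finally I would absorb the lower-endpoint contributions, the harmless discrepancy between $\log(R/2)$ and $\log R$, and the factors $\tfrac12$ and $\omega_{d-1}$ into the constant $C$, after possibly enlarging $R_1$ so that the subtracted lower-limit terms are dominated by a fixed fraction of the main term for all $R > R_1$. This produces the three bounds in \eqref{thm:i1}. The computation is essentially routine; the only points requiring genuine care are the choice of $R_1$ guaranteeing simultaneously $\sqrt{R/2} > r_0$ and $\log r_0 \ge 1$, and, in the borderline cases $\a = 1$ and $\a < 1$, verifying that the principal term really dominates the fixed constant coming from the lower limit uniformly in $R > R_1$.
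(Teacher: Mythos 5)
Your proof is correct and follows essentially the same route as the paper: restrict to the annulus $R_0<|x|<\sqrt{R/2}$ where $\psi_R(0,\cdot)\equiv 1$, pass to polar coordinates to reduce to $\int r^{-1}(\log r)^{-\a}\,dr$, and evaluate this elementary integral in the three regimes $\a>1$, $\a=1$, $\a<1$. Your extra precaution of replacing $R_0$ by $r_0=\max\{R_0,e\}$ (so that $\log r$ stays bounded away from $0$) is a small refinement the paper leaves implicit, but otherwise the arguments coincide.
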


\begin{proof}[Proof of Lemma \ref{lem:1}]

We may assume $2R_0^2 < R$. A direct computation shows  
\begin{align*}
	&{}- \int_{\R^d}\Im f(x) \psi_R(0, x) dx \\
	\ge{}& \int_{\{|x| > R_0 \}} |x|^{-d}(\log |x|)^{-\a} \left[ \eta\(\frac{|x|^2}{R}\)\right]^{2p'_0}\, dx \\
	\ge{}& \int_{\{R_0 < |x| < \sqrt{R/2} \}} |x|^{-d}(\log |x|)^{-\a} \left[ \eta\(\frac{|x|^2}{R}\)\right]^{2p'_0}\, dx \\
	={}& C \int_{R_0}^{\sqrt{R/2}} r^{-1} (\log r)^{-\a}\, dr
\end{align*}
for any $R > 2R_0^2$. Further we see that
\begin{align}
	\int_{R_0}^{\sqrt{R/2}} r^{-1} (\log r)^{-\a}\, dr \ge
	\left\{
	\begin{aligned}
	&{}C \quad \text{if}\; \a >1, \\
	&{}C \log \log R \quad \text{if}\; \a =1, \\
	&{}C (\log R)^{1-\a} \quad \text{if}\; \a <1
	\end{aligned}
	\right. \label{lem:11}
\end{align}
when $R$ is large enough. Indeed, when $\a =1$, ones easily has
\begin{align*}
	\int_{R_0}^{\sqrt{R/2}} r^{-1} (\log r)^{-\a}\, dr ={}& \log\(\log R -\log 2\) - \log 2 -\log \log R_0. 
\end{align*}
Then it holds that
\begin{align*}
	\log R -\log 2 \ge \frac12 \log R, \quad \log \log R -2\log 2 -\log R_0 \ge \frac12 \log \log R
\end{align*}
as long as $R$ is sufficiently large. Thus, we have the estimate \eqref{lem:11}. The other cases are similar way, taking into account
\begin{align*}
	&{}\frac12 \( \log R - \log 2\) \ge 2\log R_0 \quad \text{if}\; \a<1, \\
	&{}\frac12 \log R - \log 2 \ge 2\log R_0 \quad \text{if}\; \a>1
\end{align*}
as long as $R$ is sufficiently large.
In conclusion, taking $R_1 >0$ satisfying $R_1 > 2R_0^2$ and \eqref{lem:11}, we have \eqref{thm:i1} if $R > R_1$.
\end{proof}

Let us go back to the proof of Theorem \ref{thm:m1}.
Put $T_{\e} = T_{\rm{max}}(\e f)$.
let $u(t,x)$ be a weak solution on $[0,T_{\e})$ with initial condition $u(0)=\eps f$. 
For each $\e >0$, it is deduced that $T_{\e} \le R_1$ or $T_{\e} > R_1$, where $R_1$ is given by Lemma \ref{lem:1}. Since the former case is trivial, we may suppose that $T_{\e} > R_1$.
Taking the real part of the both side and $\psi = \psi_R$ in \eqref{weak:1}, from density argument and the integration by part, we deduce that weak solutions to \eqref{nls} satisfies
\begin{align} 
\begin{aligned}
	&{}\Re \iint_{(0,T_{\e}) \times \R^d}  u(t,x) \{ -i\pa_t (\psi_R(t,x)) + \D(\psi_R(t,x)) \} dt dx \\
	= {}& -\e \int_{\R^d} \Im f(x) \psi_R(0, x) dx \\
	&{}+ \Re \sum_{n \in \Z} g_n\iint_{(0,T_{\e}) \times \R^d} F_n(u(t,x)) \psi_R(t,x) dtdx 
\end{aligned}
	\label{eq:1}
\end{align}
for any $R \in [R_0, T_{\e})$, where $F_n (u) = |u|^{p_0-n}u^n$. 
Let us here introduce
\begin{align*}
	I_{n}(R) ={}& \iint_{(0,T_{\e}) \times \R^d} F_n(u(t,x)) \psi_R(t,x)\, dx dt.
\end{align*}
Then since $|I_n(R)| \le I_{0}(R)$, we have
\begin{align*}
	&{}\Re \sum_{n \in \Z} g_n\iint_{(0,T_{\e}) \times \R^d} F_n(u(t,x)) \psi_R(t,x) dtdx \\
	\ge{}& I_0(R) - \sum_{n \neq 0} |g_n| I_0(R) = (1-\mu) I_0(R).
\end{align*}
Hence, we see from the above and \eqref{eq:1} that
\begin{align*}
	&{}-\e \int_{\R^d} \Im f(x) \psi_R(0, x) dx + (1-\mu) I_0(R) \\
	\le{}& \iint_{(0,T_{\e}) \times \R^d}  |u(t,x)| \( |\pa_t (\psi_R(t,x))| + |\D(\psi_R(t,x))| \) dt dx \\
	\le{}& \iint_{(0,T_{\e}) \times \R^d}  |u(t,x)| \( \frac{C_1}{R} + C_2 \frac{|x|^2}{R^2} \)\(\psi_{R}^{\ast}(t,x)\)^{\frac{1}{p_0}} dt dx.
\end{align*}
Taking $A = C_1 + C_2/R_0$, combining the H\"older inequality with the definition of $p_0$, it is obtained that 
\begin{align*}
	&{}-\e \int_{\R^d} \Im f(x) \psi_R(0, x) dx + (1-\mu) I_0(R) \\
	\le{}& A \( \iint_{(0,T_{\e}) \times \R^d} |u(t,x)|^{p_0}\psi_{R}^{\ast}(t,x)\, dxdt\)^{\frac1{p_0}} \( \int_{(0,R) \times B(0, \sqrt{R})} \frac{1}{R^{p'_0}} dtdx \)^{\frac{1}{p'_0}} \\
	\le{}& C\( \iint_{(0,T_{\e}) \times \R^d} |u(t,x)|^{p_0}\psi_{R}^{\ast}(t,x)\, dxdt\)^{\frac1{p_0}},
\end{align*}
which implies 
\begin{align}
	\begin{aligned}
	&{}-\e \int_{\R^d} \Im f(x) \psi_R(0, x) dx + (1-\mu) I_0(R) \\
	\le{}& C\( \iint_{(0,T_{\e}) \times \R^d} |u(t,x)|^{p_0}\psi_{R}^{\ast}(t,x)\, dxdt\)^{\frac1{p_0}}. \label{eq:2}
	\end{aligned}
\end{align}
Following the idea in \cite[Lemma 3.10]{IS1}, we here define two functions by
\begin{align*}
	y(r) = \iint_{(0,T_{\e}) \times \R^d} |u(t,x)|^{p_0}\psi_{r}^{\ast}(t,x)\, dxdt, \quad Y(R) = \int_0^R y(r) \frac{dr}{r}.
\end{align*}
Note that $y \in C([0, T_{\e}))$ and $Y \in C^1([0,T_{\e}))$ satisfying 
\[
	RY'(R) = \iint_{(0,T_{\e}) \times \R^d} |u(t,x)|^{p_0}\psi_{R}^{\ast}(t,x)\, dxdt.
\]
A direct calculation shows
\begin{align*}
	Y(R) ={}& \int_0^R \( \iint_{(0,T_{\e}) \times \R^d} |u(t,x)|^{p_0}\psi_{r}^{\ast}(t,x)\, dxdt \) \frac{dr}{r} \\
	={}& \iint_{(0,T_{\e}) \times \R^d}  |u(t,x)|^{p_0} \(\int_0^R  \left[ \eta^{\ast}\(\frac{|x|^2+t}{r}\)\right]^{2p'_0} \, \frac{dr}{r}\) dxdt \\
	={}& \iint_{(0,T_{\e}) \times \R^d}  |u(t,x)|^{p_0} \( \int_{\( |x|^2 + t\)/R}^{\infty}  \left[ \eta^{\ast}\(s\)\right]^{2p'_0}\, \frac{ds}{s}\) dxdt.
\end{align*}
Noting that 
\begin{align*}
	\int_{\s}^{\infty} \left[ \eta^{\ast}\(s\)\right]^{2p'_0}\, \frac{ds}{s} \le \left[ \eta\( \s \)\right]^{2p'_0} \log 2
\end{align*}
for any $\s \ge 0$, one sees that
\begin{align*}
	Y(R) \le{}& \log 2 \iint_{(0,T_{\e}) \times \R^d}  |u(t,x)|^{p_0} \left[ \eta \( \frac{|x|^2 +t}{R} \)\right]^{2p'_0}\, dxdt \\
	={}& \log 2 \iint_{(0,T_{\e}) \times \R^d}  |u(t,x)|^{p_0} \psi_R (t,x)\, dxdt \\
	={}& \log 2\ I_0(R).
\end{align*}
Therefore, plugging these estimates and \eqref{thm:i1} into \eqref{eq:2}, it holds that
\begin{align*}
	CRY'(R) \ge{}& \left\{
	\begin{aligned}	
	&{}\( C \e + \frac{(1-\mu)}{\log 2} Y(R) \)^{p_0} \quad \text{if}\; \a >1, \\
	&{}\( C \e \log \log R + \frac{(1-\mu)}{\log 2} Y(R) \)^{p_0} \quad \text{if}\; \a =1, \\
	&{}\( C \e (\log R)^{1-\a} + \frac{(1-\mu)}{\log 2} Y(R) \)^{p_0}  \quad \text{if}\; \a <1
	\end{aligned}
	\right.
\end{align*}
for any $R \in \( R_1, T_{\e} \)$.
Hence we reach to
\begin{align*}
	-\frac{C}{R} \ge
	{}& \left\{
	\begin{aligned}	
	&{}\frac{d}{dR} \( \e + Y(R) \)^{1- p_0} \quad \text{if}\; \a >1, \\
	&{}\frac{d}{dR} \( \e \log \log R + Y(R) \)^{1-p_0} \quad \text{if}\; \a =1, \\
	&{}\frac{d}{dR} \( \e (\log R)^{1-\a} + Y(R) \)^{1-p_0}  \quad \text{if}\; \a <1.
	\end{aligned}
	\right.
\end{align*}
Let us handle the case $\a =1$. 
Integrating the both side in $(R^{1/2}, R)$, it holds that 
\begin{align*}
	\( \e \log \log R + Y(R) \)^{1-p_0} - \( \e \log \log R^{1/2} + Y(R^{1/2}) \)^{1-p_0} \le - C \log R^{1/2},
\end{align*}
which yields
\[
C\log R^{1/2} \le 
\big(
	\varepsilon \log\log R^{1/2}+Y(R^{1/2})
\big)^{1-p_0}\le
\big(
	\varepsilon \log\log R^{1/2}
\big)^{1-p_0}.
\]
Putting $S=\log R^{1/2}$, we can rewrite the above inequality as
\[
\e^{p_0-1}S(\log S)^{p_0-1}\le C^{-1}.
\]
Set $\e_{\ast} =\e^{p_0-1}$ and 
$
S_{\ast}=c_{\ast} \e_{\ast}^{-1}(\log \e_{\ast}^{-1})^{1-p_0}
$
for any $\e \in (0,1)$, where $c_{\ast}$ will be chosen later. 
Then it follows that
\begin{align*}
&{}\e^{p_0-1}S_{\ast}(\log S_{\ast})^{p_0-1} \\
={}& \e_{\ast} \Big(c_{\ast} \e_{\ast}^{-1}(\log \e_{\ast}^{-1})^{1-p_0}\Big)
\Big(\log c_{\ast}+\log \e_{\ast}^{-1}-(p_0-1)\log(\log \e_{\ast}^{-1})\Big)^{p_0-1}
\\
={}&
c_{\ast} \left(1 + \frac{\log c_{\ast} -(p_0-1) \log(\log \e_{\ast}^{-1})}{\log \e_{\ast}^{-1}}\right)^{p_0-1}
\ge c_{\ast} \delta_{\ast}
\end{align*}
for some $\delta_{\ast}>0$. Therefore taking $c_{\ast}=(\delta_{\ast}C)^{-1}$, 
we have
\[
	\e^{p_0-1}S(\log S)^{p_0-1}\le C^{-1}
	\le \e^{p_0-1}S_{\ast}(\log S_{\ast})^{p_0-1}.
\]
This provides that $S \le S_{\ast}$ and therefore 
\[
	R \le \exp(2S_{\ast}) = \exp\( 2c_{\ast} \e_{\ast}^{-1}(\log \e_{\ast}^{-1})^{1-p_0}\)
\]
holds. Taking $R \to T_{\e}$, we conclude that
\begin{align}
	T_{\e} \le \exp \(C \( \e \log \e^{-1} \)^{-\frac{2}d} \). \label{thm:m3}
\end{align}
One also chooses $\eps_0 \in (0,1)$ satisfying $\exp \(C \( \e_0 \log \e_0^{-1} \)^{-\frac{2}d} \) = R_1$. Then, \eqref{thm:m3} is true for all $\eps \in (0,\eps_0)$.
The other cases are easier. Indeed, we establish
\begin{align*}
	&{}\log R \le C \e^{\frac{1-p_0}{1-(1-\a)(1-p_0)}} \quad \text{if}\; \a<1, \\
	&{}\log R \le C \e^{1-p_0} \quad \text{if}\; \a >1,
\end{align*}
which implies the desired estimate \eqref{thm:m2}. The proof is completed.

\subsection*{Acknowledgments} 
The first author was supported by JSPS KAKENHI Grant Numbers 19K14580.
The second author was supported by JSPS KAKENHI Grant Numbers 18K13445.


\begin{bibdiv}
\begin{biblist}

\bib{FO}{article}{
      author={Fujiwara, Kazumasa},
      author={Ozawa, Tohru},
       title={Lifespan of strong solutions to the periodic nonlinear
  {S}chr\"{o}dinger equation without gauge invariance},
        date={2017},
        ISSN={1424-3199},
     journal={J. Evol. Equ.},
      volume={17},
      number={3},
       pages={1023\ndash 1030},
      review={\MR{3707307}},
}

\bib{GO}{article}{
      author={Ginibre, J.},
      author={Ozawa, T.},
       title={Long range scattering for nonlinear {S}chr\"{o}dinger and
  {H}artree equations in space dimension {$n\geq 2$}},
        date={1993},
        ISSN={0010-3616},
     journal={Comm. Math. Phys.},
      volume={151},
      number={3},
       pages={619\ndash 645},
      review={\MR{1207269}},
}

\bib{HN}{article}{
      author={Hayashi, Nakao},
      author={Naumkin, Pavel~I.},
       title={Asymptotics for large time of solutions to the nonlinear
  {S}chr\"{o}dinger and {H}artree equations},
        date={1998},
        ISSN={0002-9327},
     journal={Amer. J. Math.},
      volume={120},
      number={2},
       pages={369\ndash 389},
      review={\MR{1613646}},
}

\bib{II}{article}{
      author={Ikeda, Masahiro},
      author={Inui, Takahisa},
       title={Small data blow-up of {$L^2$} or {$H^1$}-solution for the
  semilinear {S}chr\"{o}dinger equation without gauge invariance},
        date={2015},
        ISSN={1424-3199},
     journal={J. Evol. Equ.},
      volume={15},
      number={3},
       pages={571\ndash 581},
      review={\MR{3394699}},
}

\bib{IS1}{article}{
      author={Ikeda, Masahiro},
      author={Sobajima, Motohiro},
       title={Sharp upper bound for lifespan of solutions to some critical
  semilinear parabolic, dispersive and hyperbolic equations via a test function
  method},
        date={2019},
        ISSN={0362-546X},
     journal={Nonlinear Anal.},
      volume={182},
       pages={57\ndash 74},
      review={\MR{3894246}},
}

\bib{IW}{article}{
      author={Ikeda, Masahiro},
      author={Wakasugi, Yuta},
       title={Small-data blow-up of {$L^2$}-solution for the nonlinear
  {S}chr\"{o}dinger equation without gauge invariance},
        date={2013},
        ISSN={0893-4983},
     journal={Differential Integral Equations},
      volume={26},
      number={11-12},
       pages={1275\ndash 1285},
      review={\MR{3129009}},
}

\bib{MM1}{article}{
      author={Masaki, Satoshi},
      author={Miyazaki, Hayato},
       title={Long range scattering for nonlinear {S}chr\"{o}dinger equations
  with critical homogeneous nonlinearity},
        date={2018},
        ISSN={0036-1410},
     journal={SIAM J. Math. Anal.},
      volume={50},
      number={3},
       pages={3251\ndash 3270},
      review={\MR{3815545}},
}

\bib{MM2}{article}{
      author={Masaki, Satoshi},
      author={Miyazaki, Hayato},
       title={Nonexistence of scattering and modified scattering states for
  some nonlinear {S}chr\"{o}dinger equation with critical homogeneous
  nonlinearity},
        date={2019},
        ISSN={0893-4983},
     journal={Differential Integral Equations},
      volume={32},
      number={3-4},
       pages={121\ndash 138},
      review={\MR{3909981}},
}

\bib{MMU}{article}{
      author={Masaki, Satoshi},
      author={Miyazaki, Hayato},
      author={Uriya, Kota},
       title={Long-range scattering for nonlinear {S}chr\"{o}dinger equations
  with critical homogeneous nonlinearity in three space dimensions},
        date={2019},
        ISSN={0002-9947},
     journal={Trans. Amer. Math. Soc.},
      volume={371},
      number={11},
       pages={7925\ndash 7947},
      review={\MR{3955539}},
}

\bib{MP}{article}{
      author={Mitidieri, \`E.},
      author={Pokhozhaev, S.~I.},
       title={A priori estimates and the absence of solutions of nonlinear
  partial differential equations and inequalities},
        date={2001},
        ISSN={0371-9685},
     journal={Tr. Mat. Inst. Steklova},
      volume={234},
       pages={1\ndash 384},
      review={\MR{1879326}},
}

\bib{OOP}{article}{
      author={Oh, Tadahiro},
      author={Okamoto, Mamoru},
      author={Pocovnicu, Oana},
       title={On the probabilistic well-posedness of the nonlinear
  {S}chr\"{o}dinger equations with non-algebraic nonlinearities},
        date={2019},
        ISSN={1078-0947},
     journal={Discrete Contin. Dyn. Syst.},
      volume={39},
      number={6},
       pages={3479\ndash 3520},
      review={\MR{3959438}},
}

\bib{O1}{article}{
      author={Ozawa, Tohru},
       title={Long range scattering for nonlinear {S}chr\"{o}dinger equations
  in one space dimension},
        date={1991},
        ISSN={0010-3616},
     journal={Comm. Math. Phys.},
      volume={139},
      number={3},
       pages={479\ndash 493},
      review={\MR{1121130}},
}

\bib{S}{article}{
      author={Shimomura, Akihiro},
       title={Nonexistence of asymptotically free solutions for quadratic
  nonlinear {S}chr\"{o}dinger equations in two space dimensions},
        date={2005},
        ISSN={0893-4983},
     journal={Differential Integral Equations},
      volume={18},
      number={3},
       pages={325\ndash 335},
      review={\MR{2122723}},
}

\bib{ST}{article}{
      author={Shimomura, Akihiro},
      author={Tsutsumi, Yoshio},
       title={Nonexistence of scattering states for some quadratic nonlinear
  {S}chr\"{o}dinger equations in two space dimensions},
        date={2006},
        ISSN={0893-4983},
     journal={Differential Integral Equations},
      volume={19},
      number={9},
       pages={1047\ndash 1060},
      review={\MR{2262096}},
}

\bib{TY}{article}{
      author={Tsutsumi, Yoshio},
      author={Yajima, Kenji},
       title={The asymptotic behavior of nonlinear {S}chr\"{o}dinger
  equations},
        date={1984},
        ISSN={0273-0979},
     journal={Bull. Amer. Math. Soc. (N.S.)},
      volume={11},
      number={1},
       pages={186\ndash 188},
      review={\MR{741737}},
}

\bib{QZ1}{article}{
      author={Zhang, Qi~S.},
       title={Blow-up results for nonlinear parabolic equations on manifolds},
        date={1999},
        ISSN={0012-7094},
     journal={Duke Math. J.},
      volume={97},
      number={3},
       pages={515\ndash 539},
      review={\MR{1682987}},
}

\bib{QZ2}{article}{
      author={Zhang, Qi~S.},
       title={A blow-up result for a nonlinear wave equation with damping: the
  critical case},
        date={2001},
        ISSN={0764-4442},
     journal={C. R. Acad. Sci. Paris S\'{e}r. I Math.},
      volume={333},
      number={2},
       pages={109\ndash 114},
      review={\MR{1847355}},
}

\end{biblist}
\end{bibdiv}

\end{document}